\newtheorem{lemma}{Lemma}
\newtheorem{theorem}{Theorem}
\newtheorem{proposition}{Proposition}
\def\ER{Erd\H{o}s-R\'enyi }
\def\PP{{\mathbb P}}
\def\EE{{\mathbb E}}
\def\R{{\mathbb{R}}}
\def\N{{\mathbb N}}
\def\Z{{\mathbb Z}}
\def\GG{{\mathbb G}}
\def\cM{{\mathcal M}}
\def\cH{{\mathcal H}}
\def\cP{{\mathcal P}}
\def\cW{{\mathcal W}}
\def\ind{{\mathbbm{1}}}
\def\f{\mathfrak f}
\def\m{\mathfrak m}
\title{The interpolation method for random graphs with prescribed degrees}
\author{Justin Salez}
\begin{document}
\maketitle

\begin{abstract}
We consider large random graphs with prescribed degrees, such as those generated by the configuration model. In the regime where the empirical degree distribution approaches a limit $\mu$ with finite mean, we establish the systematic convergence of a broad class of graph parameters that includes in particular the independence number, the maximum cut size and the log-partition function of the antiferromagnetic Ising and Potts models. The corresponding limits are shown to be Lipschitz and concave functions of $\mu$. Our work extends the applicability of the celebrated interpolation method, introduced in the context of spin glasses, and recently related to the fascinating problem of right-convergence of sparse graphs.\vspace{0.1cm} \\
\textbf{Keywords:} interpolation method; graph parameters; configuration model.\vspace{0.1cm}\\
\textbf{2010 MSC:} 60C05, 05C80, 82-08.
\end{abstract}

\section{Introduction}

\paragraph{Background.} A decade ago, Guerra and Toninelli \cite{GT02} introduced a powerful method to prove the existence of an infinite volume limit for the normalized log-partition function of the celebrated \emph{Sherrington-Kirkpatrick model}. The argument is based on an ingenious interpolation scheme which allows a system of size $n$ to be compared with two similar but independent systems of sizes $n_1$ and $n_2$ respectively, where $n_1+n_2=n$. The quantity of interest turns out to be sub-additive with respect to $n$, hence convergent once divided by $n$. This technique was then transferred from fully-connected models (complete graph) to their \emph{diluted} counterparts (sparse random graphs), where each particle only interacts with a finite, random number of neighbours. See in particular \cite{FL03} for the \ER case, and \cite{FLT03} for arbitrary degree distributions. 

In a recent breakthrough  \cite{interp}, the applicability of the interpolation method was extended to a variety of important models including, among others, the \emph{Ising model}, the \emph{Potts model} and the \emph{hard-core model}. As a special case, the long-conjectured convergence of the independence ratio of sparse \ER and $d-$regular random graphs was confirmed (see also the recent preprint \cite{sly}, where the limit is explicitly determined when $d$ is  large enough). The sub-additivity inherent to all those models was subsequently shown to follow from a single convexity property \cite{Gamarnik}, thereby shedding new light on the fascinating question of  \emph{right-convergence} (i.e. generic convergence of log-partition functions) of sparse random graphs \cite{BCKL,sparse,LD}. 

The aim of the present paper is to extend to random graphs with an arbitrary degree sequence the results obtained in \cite{interp,Gamarnik} for the \ER and $d-$regular random graphs. This substantial generalization allows us to investigate the general properties of the infinite volume limits when regarded as functions of the asymptotic degree distribution. Our interpolation scheme is rather simple, and the class of graph parameters to which our result applies is not \emph{a priori} restricted to log-partition functions or their zero-temperature limits. 

\paragraph{Graph parameters.} All graphs considered here are finite and undirected, with loops and multiple edges allowed. 
A \emph{graph parameter} is a real-valued function $\f$ defined on graphs, that is invariant under isomorphism. We shall restrict our attention to graph parameters satisfying the following properties. 
\begin{itemize}
\item \emph{Additive}: if $G$ is the vertex-disjoint union of $G_1$ and $G_2$, then 
$$\f(G) =  \f(G_1)+\f(G_2).$$
\item \emph{Lipschitz}: there is $\kappa<\infty$ so that for any graph $G=(V,E)$ and $(i,j)\in V^2$, 
\begin{eqnarray*}
\label{h2}
\left|\Delta^G_{ij}\right|\leq\kappa,& \textrm{ where } &\Delta^G_{ij}=\f(G + ij)-\f(G).
\end{eqnarray*} 
($G + ij$ is the graph obtained by adding a new edge between $i$ and $j$).
\item \emph{Concave}: $\{\Delta^G_{ij}\}_{(i,j)\in V\times V}$  is \emph{conditionally negative semidefinite} (\textsc{cnd}), i.e.  
\begin{eqnarray*}
\label{cnd}
\sum_{i\in V}x_i=0 & \Longrightarrow & \sum_{(i,j)\in V\times V}\Delta^G_{ij}x_ix_j\leq 0.
\end{eqnarray*}
\end{itemize}

This relaxed form of negative semi-definiteness is slightly less restrictive than the one imposed in \cite{Gamarnik}. \textsc{cnd} matrices are well-studied due to their intimate connection with infinite divisibile matrices. We gather here some of their most useful properties, and refer the reader to \cite[Chapter 4]{bapat} for more details. 
\begin{enumerate}
\item $\{\Delta_{ij}\}$ is \textsc{cnd} if and only if $\{\alpha_i+\alpha_j-\Delta_{ij}\}$ is positive semidefinite for some $\{\alpha_i\}\in\R^V$. Another equivalent condition is the infinite divisibility of $\{e^{-\Delta_{ij}}\}$, i.e.  $\{e^{-\lambda\Delta_{ij}}\}$ is positive definite for all $\lambda>0$.
\item 
\label{convex}
The \textsc{cnd} matrices $\{\Delta_{ij}\}_{(i,j)\in V\times V}$ form a convex cone.
\item 
\label{project}
If $\{\Delta_{ij}\}_{(i,j)\in S\times S}$ is \textsc{cnd} then so is $\{\Delta_{\sigma(i),\sigma(j)}\}_{(i,j)\in V\times V}$ for any $\sigma\colon V\to S$.
\item 
\label{log}
A sufficient condition for $\{\Delta_{ij}\}$ to be \textsc{cnd} is that $\{e^{\Delta_{ij}}\}$ is \textsc{cnd} (combine \cite[Theorem 4.4.4]{bapat} with \cite[Corollary 4.1.5]{bapat}).
\end{enumerate}

\paragraph{Examples.}
Many important graph parameters (or their negative) belong to the above class. Here are a few examples.
\begin{itemize}
\item \emph{Number of connected components}: the increment matrix is simply   $\Delta^G=-\sum\ind^{\phantom{\top}}_S\ind^\top_S$, where the sum runs over the connected components $S$ of $G$. 
\item \emph{Independence number}: here $\Delta^G_{ij}=1-\left(\ind^{\phantom{\top}}_S\ind^\top_S\right)_{ij}$, where the set $S\subseteq V$ is the intersection of all maximum independent sets on $G$. 
\item \emph{Maximum cut size}: $\Delta^G_{ij}=1-\prod(\ind^{\phantom{\top}}_S\ind^\top_S
+\ind^{\phantom{\top}}_{\overline{S}}\ind^\top_{\overline{S}})_{ij}$, where $\Pi$ runs over all maximum cuts $(S,\overline S)$ (entry-wise product preserves positive definiteness). 
\item \emph{Log-partition functions}: fix a finite set $S$, a map $h\colon S\to (0,+\infty)$  and a symmetric map $J\colon S\times S \to (0,+\infty)$, and consider the graph parameter
\begin{eqnarray*}
\f(G)& := & \log\left(\sum_{\sigma\in S^V}w(\sigma)\right)\textrm{ where }w(\sigma)=\prod_{i\in V}h(\sigma_i)\prod_{ij\in E}J(\sigma_i,\sigma_j).
\end{eqnarray*}
Then $\f$ is easily seen to be additive and Lipschitz, and a sufficient condition for it to be concave is that the matrix $J$ is  \textsc{cnd}. Indeed, (iii) ensures that  $J^\sigma:=\{J(\sigma_i,\sigma_j)\}_{(i,j)\in V\times V}$ is \textsc{cnd} for all  $\sigma\in S^V$ and (ii) then implies that $$\frac{\sum_{\sigma\in S^V}w(\sigma)J^\sigma}{\sum_{\sigma\in S^V}w(\sigma)}$$ is \textsc{cnd}. But this is  exactly $\{e^{\f(G+ij)-\f(G)}\}$, and (iv) allows to conclude.

In particular, the log-partition functions of the \emph{Ising model} ($S=\{-1,+1\}$, $J(s,t)=e^{-\beta st}$) and  \emph{Potts model} ($S=\{1,\ldots,q\}$, $J(s,t)=\ind_{s\neq t}+e^{-\beta}\ind_{s=t}$) are additive, Lipschitz and concave graph parameters for all $\beta\geq 0$.
\end{itemize}

\paragraph{Result.} The present paper is concerned with the asymptotic behaviour of such graph parameters when evaluated on large random graphs with prescribed degrees. 
For each $n\geq 1$, we let $\GG_{d_n}$ denote a random graph on $V=\{1,\ldots,n\}$ generated by the \emph{configuration model} \cite{Bol80,Hof13} with degrees $d_n=\{d_n(i)\}_{1\leq i\leq n}$  (see section \ref{sec:main} for the precise definition). 
We assume that the sequence $\{d_n\}_{n\geq 1}$ approaches a probability measure $\mu$ on $\N$ with mean $\overline\mu<+\infty$, in the sense that 
\begin{eqnarray}
\label{h4}
\forall k\in\N,\qquad \frac{1}{n}\sum_{i=1}^n\ind_{\{d_n(i)=k\}} & \xrightarrow[n\to\infty]{}& \mu(k)\\
\label{h5}
\frac{1}{n}\sum_{i=1}^n d_n(i) & \xrightarrow[n\to\infty]{} &\overline{\mu}.
\end{eqnarray}
In other words, the empirical measure $\frac{1}{n}\sum_{i=1}^n\delta_{d_n(i)}$ converges to $\mu$ in the \emph{Wasserstein space} $\cP_1(\N)$. This is the space of probability measures on $\N$ with finite mean, equipped with the Wasserstein distance 
\begin{eqnarray*}
{\cW}\left(\mu,\mu'\right) 
& = & \sum_{i=1}^\infty\left|\sum_{k=i}^\infty\left(\mu(k)-\mu'(k)\right)\right|.
\end{eqnarray*}
We refer the reader to the books  \cite[Chapter 2]{W1} or \cite[Chapter 6]{villani} for more details on Wasserstein spaces $\cP_p({\cal X})$ and  many alternative expressions for $\cW$. 

\begin{theorem}\label{th:main}Every additive, Lipschitz, concave graph parameter admits an ``infinite volume limit" $\Psi\colon\cP_1(\N)\to\R$ in the following sense: for any $\mu\in\cP_1(\N)$ and any $\{d_n\}_{n\geq 1}$ satisfying (\ref{h4})-(\ref{h5}), we have the almost-sure convergence
\begin{eqnarray}
\label{cvmain}
\frac{\f(\GG_{d_n})}{n} & \xrightarrow[n\to\infty]{} & \Psi(\mu).
\end{eqnarray}
Moreover,  $\Psi$ is Lipschitz and concave: for any $\mu,\mu'\in\cP_1(\N)$,
\begin{eqnarray}
\label{lipschitz}
\left|\Psi(\mu)-\Psi(\mu')\right| & \leq & 2\kappa\cW(\mu,\mu')\\
\label{concave}
\theta\Psi(\mu)+(1-\theta)\Psi(\mu')  & \leq  & \Psi(\theta\mu+(1-\theta)\mu') \qquad (0\leq\theta\leq 1).
\end{eqnarray}
\end{theorem}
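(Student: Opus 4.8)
The plan is to run the Guerra--Toninelli interpolation directly on the configuration model, comparing a graph on $n=n_1+n_2$ vertices with the disjoint union of two independent copies on $n_1$ and $n_2$ vertices, and to extract sub-additivity of a suitable quantity. First I would reduce to integer-valued, bounded degree sequences that are \emph{exactly} distributed according to a fixed law: given $\mu\in\cP_1(\N)$, truncate it to $\mu^{(K)}$ supported on $\{0,\dots,K\}$, and for each $n$ let $d_n$ be a degree sequence whose empirical distribution is as close as possible to $\mu^{(K)}$ subject to the sum being even. The Lipschitz hypothesis on $\f$ (together with the edge-counting bound $|E|\le \frac12\sum_i d_n(i)$) lets me control $\f(\GG_{d_n})/n$ under changing a bounded number of edges, hence under replacing one degree sequence by another that is $\cW$-close; this simultaneously gives \eqref{lipschitz} once the limit exists, and shows the limit does not depend on the particular sequence $\{d_n\}$ realizing $\mu$. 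So it suffices to prove convergence of $\EE\f(\GG_{d_n})/n$ along a canonical sequence, and then upgrade to almost-sure convergence by a bounded-difference concentration argument (Azuma--Hoeffding on the edge-revealing or pairing-revealing martingale, using again the Lipschitz constant $\kappa$).

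For the convergence of $\EE\f(\GG_{d_n})/n$ I would set up the interpolation. Fix the target degree law and, for $t\in[0,1]$, define an interpolating random graph $\GG(t)$ on $n$ vertices in which each half-edge, independently, is either paired within a ``size-$n$'' configuration with probability related to $t$ or routed into one of two independent sub-configurations of sizes $\lceil n_1 t\rceil$-ish; concretely one interpolates the \emph{Poissonized} number of edges, writing the configuration model's $m=\frac12\sum d_n(i)$ edges as a superposition, so that $\phi(t):=\EE\f(\GG(t))$ satisfies $\phi(1)=\EE\f(\GG_{d_n})$ and $\phi(0)=\EE\f(\GG_{d_{n_1}})+\EE\f(\GG_{d_{n_2}})$ up to $o(n)$ corrections coming from parity and from matching the empirical degree distributions. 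The key computation is $\phi'(t)$: differentiating produces, for each infinitesimal rewiring, the expectation of an increment $\Delta^{G}_{ij}$ where $i,j$ are endpoints chosen with probability proportional to residual degrees. Here the concavity (\textsc{cnd}) hypothesis enters: the ``size-$n$'' term contributes $\sum_{ij}\Delta^G_{ij}\,\pi_i\pi_j$ with $\pi_i=d_n(i)/(2m)$, while the two ``split'' terms contribute the analogous sums over each part; because $\{\Delta^G_{ij}\}$ is \textsc{cnd} and $\pi - (\lambda\pi^{(1)}+(1-\lambda)\pi^{(2)})$ is a signed measure of total mass $0$ when the degree profiles are matched, the quadratic-form inequality in the definition of \textsc{cnd} forces $\phi'(t)\ge 0$ (or $\le 0$), i.e. $\EE\f(\GG_{d_n})\ge \EE\f(\GG_{d_{n_1}})+\EE\f(\GG_{d_{n_2}}) - o(n)$. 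An approximate super-additivity lemma (Fekete with an $o(n)$ error) then yields $\lim_n \EE\f(\GG_{d_n})/n =: \Psi(\mu)$.

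With $\Psi$ defined on the dense subset of finitely-supported rational-mass measures, extend it to all of $\cP_1(\N)$ by the Lipschitz bound \eqref{lipschitz}, which is uniform and hence passes to the completion; the almost-sure convergence \eqref{cvmain} for an arbitrary admissible $\{d_n\}$ then follows by combining concentration, the degree-sequence-robustness estimate, and a diagonal/truncation argument letting $K\to\infty$ (using $\overline\mu<\infty$ so that $\cW(\mu^{(K)},\mu)\to0$). Finally, concavity \eqref{concave}: for $\mu,\mu'$ finitely supported with rational masses and $\theta$ rational, take $n$ large and divisible so that a degree sequence of length $n$ can be split into a length-$\theta n$ block with empirical law $\mu$ and a length-$(1-\theta)n$ block with empirical law $\mu'$; its overall empirical law is $\theta\mu+(1-\theta)\mu'$. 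Applying the \emph{same} interpolation inequality with $n_1=\theta n$, $n_2=(1-\theta)n$ but now with \emph{unequal} degree profiles gives $\EE\f(\GG_{d_n}) \ge \EE\f(\GG_{d_{n_1}}) + \EE\f(\GG_{d_{n_2}}) - o(n)$, and dividing by $n$ and sending $n\to\infty$ yields exactly \eqref{concave}; density and continuity extend it to all $\mu,\mu',\theta$.

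I expect the main obstacle to be the $\phi'(t)$ computation in the presence of prescribed degrees: unlike the \ER case, the configuration model does not decompose as a clean Poisson superposition, so one must either Poissonize carefully (introducing and then controlling the discrepancy between a Poissonized and an exact number of pairings, and between independent half-edge routing and a genuine uniform perfect matching) or argue via an auxiliary ``colored'' configuration model in which each half-edge carries an independent label in $\{0,1,2\}$; either way, checking that the sign of $\phi'(t)$ is governed precisely by a \textsc{cnd} quadratic form in the residual-degree measures, and that all the bookkeeping errors are genuinely $o(n)$ (so Fekete still applies), is the delicate part.
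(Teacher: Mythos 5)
Your high-level skeleton (interpolation $\Rightarrow$ approximate super-additivity $\Rightarrow$ Fekete-type limit, Lipschitz estimate for degree-sequence robustness, Azuma--Hoeffding for a.s.\ concentration, mid-point concavity from the same interpolation applied to mixed degree profiles) matches the paper's. But the core step --- the interpolation inequality itself --- is exactly the part you flag as ``the delicate part,'' and the route you sketch for it has a real gap. The configuration model does not Poissonize cleanly: the number of edges is deterministic and the pairing is a uniform perfect matching, not an independent superposition, so there is no honest $t$-parametrized family $\GG(t)$ with a derivative of the form $\sum_{ij}\Delta^G_{ij}\pi_i\pi_j$ that interpolates between $\GG_{d_n}$ and two independent sub-configurations. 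Controlling the discrepancy between a Poissonized surrogate and the actual model is not a minor bookkeeping matter here, because the super-additivity error must be sublinear uniformly over the interpolation, and residual degrees become dependent as the matching is revealed. Also, as stated your signed measure $\pi-(\lambda\pi^{(1)}+(1-\lambda)\pi^{(2)})$ is degenerate (with $\pi_i\propto d(i)$, $\pi^{(k)}_i\propto d(i)\ind_{i\in V_k}$, and $\lambda=m_1/m$, this difference is identically zero); the right zero-mass vector is $\pi^{(1)}-\pi^{(2)}$, i.e.\ the residual-degree distribution on $A$ minus that on $B$, which is what makes the CND inequality compare ``within-$A$ plus within-$B$'' against ``twice cross.''

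The paper avoids Poissonization entirely by working with a fully discrete object: $F(\alpha,\beta,\gamma)$, the mean of $\f$ over uniform partial matchings with $\alpha$ within-$A$, $\beta$ within-$B$ and $\gamma$ cross edges. Sequential uniform pairings (Lemma~\ref{lm:pairing}) give exact one-step comparisons; Lemma~\ref{superadd} is the discrete $\phi'(t)\le 0$ statement, proved conditionally on the current matching (so the randomness of residual degrees is no obstacle) via the CND quadratic form in $\pi^{(1)}-\pi^{(2)}$ plus a diagonal error of order $\kappa/c(A)+\kappa/c(B)$; and a simple random walk coupling converts $(\lfloor d(A)/2\rfloor,\lfloor d(B)/2\rfloor,0)$ into $(\lfloor (d(A)-\gamma)/2\rfloor,\lfloor (d(B)-\gamma)/2\rfloor,\gamma)$ with total error $\varphi(\gamma)=O(\sqrt{\gamma\log\gamma})$. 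This is essentially your second proposed workaround (the ``colored configuration model''), carried through. The paper also routes through IID degree sequences and De Bruijn--Erd\H{o}s rather than truncation-plus-Fekete, but that is a cosmetic difference; the substantive missing piece in your proposal is a precise interpolation mechanism compatible with fixed-degree uniform matchings, which the discrete $(\alpha,\beta,\gamma)$ framework supplies.
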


\paragraph{The \textsc{IID} case.} A common setting consists in taking $d_n=(\Delta_1,\ldots,\Delta_n)$, where $\{\Delta_i\}_{i\geq 1}$ are \textsc{iid} samples from a target degree distribution $\mu\in\cP_1(\N)$. We denote  by $\GG^\textsc{iid}_{\mu,n}$ the resulting doubly random graph. The result (\ref{cvmain}) applies, since  (\ref{h4})-(\ref{h5}) hold almost-surely by the strong law of large numbers. In fact, the convergence
\begin{eqnarray}
\label{sllnw}
\cW\left(\frac{1}{n}\sum_{i=1}^n\delta_{\Delta_i},\mu\right)& \xrightarrow[n\to\infty]{} & 0,
\end{eqnarray}
holds almost-surely and in $L^1$, see \cite[Theorem 2.14]{W1} and  \cite[Theorem 3.5]{W1}.

\paragraph{Simple graphs.} Under assumption (\ref{h4}), a sufficient condition for (\ref{h5}) is
\begin{equation}
\label{h6}
\sup_{n\geq 1}\frac{1}{n}\sum_{i=1}^n d^2_n(i)<\infty.
\end{equation}
Under this condition and if $\sum_{i=1}^n d_n(i)$ is even, the probability that $\GG_{d_n}$ is simple remains bounded away from $0$ as $n\to\infty$, see \cite{Jan09,janson2}. Moreover, conditionally on being simple, $\GG_{d_n}$ is uniformly distributed on the set of all simple graphs with degrees $\{d_n(i)\}_{1\leq i\leq n}$. Consequently, the convergence (\ref{cvmain}) also applies to uniform simple graphs with degrees $\{d_n(i)\}_{1\leq i\leq n}$. In the sparse regime, the \ER random graph and the more general \emph{rank-one inhomogeneous random graph} \cite{IRG, Hof13} have degree sequences which satisfy almost-surely assumptions (\ref{h4}) and (\ref{h6}). Moreover, conditionally on the degree sequence, their distribution is uniform. Thus, the conclusion of Theorem \ref{th:main} applies to those models as well.

\paragraph{Extensions.} By linearity, the convergence (\ref{cvmain}) extends to any linear combination of additive, Lipschitz, concave graph parameters. Such parameters remain  additive and Lipschitz, and it is perhaps natural to ask the following:
\begin{center}
\emph{
Does the convergence (\ref{cvmain}) hold for any additive Lipschitz graph parameter?
}
\end{center}
Note that a positive answer would in particular imply  \cite[Conjecture 1]{Gamarnik}.

\section{Proof outline}

Our main ingredient is the following inequality, with $\varphi(x)=7\kappa\,\sqrt{x\ln(1+x)}$. 
\begin{proposition}
\label{pr:main}
Let $A,B$ be finite disjoint sets and let $d\colon A\cup B\to\N$. Write $d\restriction A$, $d\restriction B$  for the restrictions of $d$ to $A$, $B$. Then,
\begin{eqnarray*}
\EE\left[\f\left(\GG_{d\restriction A}\right)\right]+\EE\left[\f\left(\GG_{d\restriction B}\right)\right] & \leq & \EE\left[\f\left(\GG_{d}\right)\right]+\varphi\left(\frac{1}{2}\sum_{i\in A\cup B}d(i)\right).
\end{eqnarray*}
\end{proposition}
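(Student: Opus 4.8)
The plan is to build an interpolating family of random graphs $(\GG_t)_{t\in[0,1]}$ that at $t=1$ is essentially $\GG_d$ and at $t=0$ is the disjoint union $\GG_{d\restriction A}\sqcup\GG_{d\restriction B}$, then to show that $t\mapsto\EE[\f(\GG_t)]$ is (up to the error term $\varphi$) nondecreasing, so that comparing the endpoints yields the claimed inequality. Concretely, I would work with the half-edge (configuration) picture: let $m=\frac12\sum_{i\in A\cup B}d(i)$ and realize $\GG_d$ by pairing up uniformly at random the $2m$ half-edges attached to vertices of $A\cup B$. The interpolation is a "pairing-by-stages" coupling: process the $m$ matched pairs one at a time (or, in a Poissonized/continuous version, reveal pairings along a Poisson clock of total rate $m$), and for each pair decide with the appropriate probability whether both endpoints are forced to lie in the same side ($A$ or $B$) or are left free. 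The parameter $t$ interpolates the fraction of pairs that are sampled from the "merged" distribution versus the "split" distribution; matching first and second moments of the induced degree split is what makes the boundary terms come out right.

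The key steps, in order, are: (1) set up the half-edge model precisely and write $\EE[\f(\GG_t)]$ as an expectation over the random pairing; (2) compute $\frac{d}{dt}\EE[\f(\GG_t)]$ (or the discrete increment when one more pair is revealed) — by additivity this derivative is a linear combination of expected increments $\EE[\Delta^{G}_{ij}]$ where $(i,j)$ ranges over pairs of half-edge endpoints, the "merged" choice contributing one such term and the "split" choice contributing an average of two (one within $A$, one within $B$); (3) use concavity, i.e. the \textsc{cnd} property, to show that the merged contribution dominates the split contribution \emph{after averaging over the uniform random matching} — here property (\ref{project}) lets me push the \textsc{cnd} matrix on $A\cup B$ down to the multiset of half-edge owners, and the \textsc{cnd} inequality $\sum\Delta_{ij}x_ix_j\le 0$ with $x$ the signed indicator "$+1$ if the half-edge is revealed/matched, $-1$ otherwise, summing to zero" produces exactly the sign we need; (4) control the discrepancy between the idealized interpolation endpoints and the actual graphs $\GG_d$, $\GG_{d\restriction A}$, $\GG_{d\restriction B}$ (sizes need not split evenly, parities may be off by one, a bounded number of half-edges may be left unpaired) using the Lipschitz bound, each such defect costing at most $O(\kappa)$; (5) bound the accumulated second-order / fluctuation error by $\varphi(m)=7\kappa\sqrt{m\ln(1+m)}$ — this is where the $\sqrt{m\ln m}$ shape comes from, via a concentration estimate (Azuma–Hoeffding on the pairing martingale, or a union bound over the $m$ steps each contributing a $\sqrt{\ln m}$-type deviation) on how far the empirical degree split at each stage is from its mean.

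I expect the main obstacle to be step (3) combined with step (5): making the \textsc{cnd}/concavity argument work at the level of \emph{random} pairings of half-edges rather than a fixed bijection, i.e. choosing the test vector $x$ and the conditioning so that the inequality $\sum_{(i,j)}\Delta^G_{ij}x_ix_j\le0$ applies verbatim and, crucially, so that the leftover positive terms are exactly the $O(\kappa)$-bounded "diagonal" or "boundary" contributions that feed into $\varphi$. The delicate point is that the naive derivative has a second-order correction coming from the fact that matching half-edge endpoints are not independent under the uniform random pairing (sampling without replacement), and quantifying this correction uniformly — showing it is absorbed by $7\kappa\sqrt{m\ln(1+m)}$ rather than growing like $m$ — requires the concentration input; once the empirical half-edge configuration is within $\sqrt{m\ln m}$ of its expectation throughout the interpolation, the Lipschitz property converts that deviation into the stated additive error. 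Everything else (additivity bookkeeping, the endpoint identifications, linearity) is routine.
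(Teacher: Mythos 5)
Your plan matches the structure of the paper's proof quite closely: reduce to the half-edge picture, compare the merged and split pairing one step at a time, use the \textsc{cnd} property to show the cross-pairing dominates the average of the two same-side pairings up to a small correction, and then accumulate these corrections over the interpolation while controlling the random $A$/$B$ split by a concentration bound, giving $\sqrt{m\ln m}$. The paper implements this as follows: it conditions on the number $\gamma$ of cross-edges (Proposition \ref{pr:global}), sets up a supermartingale $Z_t = F(\alpha_{t\wedge T},\beta_{t\wedge T},\gamma_{t\wedge T})$ driven by a simple random walk $S_t$ on $\Z$ that records which side gets the extra same-side edge, stops at $T=\inf\{t:|S_t|>\delta\}$ to keep feasibility, uses Doob's maximal inequality for the tail of $T$, and optimizes $\delta\approx\sqrt{\gamma\ln(1+\gamma)}$ — this is exactly your step (5) made precise, with the ``slack'' $\delta$ being the quantity to balance.

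One concrete point in your step (3) would not survive contact with the computation: the \textsc{cnd} test vector is not the signed indicator of ``revealed versus unrevealed'' half-edges (which does not even sum to zero in general). In the paper's Lemma \ref{superadd} the vector lives on vertices, is positive on $A$ and negative on $B$, and is weighted by the count $c(i)$ of unpaired half-edges at each vertex, normalized by $c(A)$, $c(B)$ so that it sums to zero; the quadratic form $\sum\Delta_{ij}x_ix_j\le 0$ then realizes the merged-minus-split comparison after one pairing step, with the leftover diagonal ($i=j$) terms bounded by $\kappa/c(A)+\kappa/c(B)\le 2\kappa/\delta$. So the sign in $x$ must encode the $A$/$B$ partition, not the revealed/unrevealed status, and the weights must be the free half-edge counts. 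Once that is corrected, your outline aligns with the paper's argument.
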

As this holds for any degrees $\{d(i)\}_{i\in A\cup B}$, we may fix $\mu\in\cP_1(\N)$ and average it against $\mu^{\otimes A\cup B}$. Since $\varphi$ is concave, Jensen's inequality yields
\begin{eqnarray*}
\EE\left[\f(\GG^\textsc{iid}_{\mu,|A|})\right]+\EE\left[\f(\GG^\textsc{iid}_{\mu,|B|})\right]& \leq & \EE\left[\f(\GG^\textsc{iid}_{\mu,|A|+|B|})\right]+\varphi\left(\frac{\overline{\mu}}{2}(|A|+|B|)\right).
\end{eqnarray*}
By a classical result of De Bruijn and Erd\H{o}s \cite[Theorem 23]{Bruijn}, this near super-additivity suffices to guarantee the existence of the limit
\begin{eqnarray}
\label{eq:cviid}
\Psi(\mu)& := & \lim_{n\to\infty}\frac{\EE\left[\f\left(\GG^\textsc{iid}_{\mu,n}\right)\right]}{n}\in\R\cup\{+\infty\}.
\end{eqnarray} 
In our case we must have $\Psi(\mu)<\infty$, since the additive and Lipschitz properties of $\f$ easily imply that $\f(G)=O(|V|+|E|)$ uniformly over all graphs $G=(V,E)$. 

Our second ingredient is the following simple result, which quantifies the intuition that $\EE\left[\f(\GG_{d'})\right]$ should be close to $\EE\left[\f(\GG_{d})\right]$ whenever $d'$ is close to $d$.
\begin{proposition}
\label{pr:compare}
For any $d,d'\colon \{1,\ldots,n\}\to\N$, 
\begin{eqnarray*}
\left|\frac{\EE\left[\f(\GG_{d})\right]}n-\frac{\EE\left[\f(\GG_{d'})\right]}n\right| & \leq & 2\kappa \cW\left(\frac{1}{n}\sum_{i=1}^n\delta_{d(i)},\frac{1}{n}\sum_{i=1}^n\delta_{d'(i)}\right).
\end{eqnarray*}
\end{proposition}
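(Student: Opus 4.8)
The plan is to bound the left-hand side by an \emph{edit distance} between configuration models: I would pass from $d$ to $d'$ by changing degrees one unit at a time, and show that each such unit change perturbs the expected value of $\f$ by at most $2\kappa$, using only the Lipschitz property together with an elementary coupling of the two models. First, since $\f$ is invariant under isomorphism, $\EE[\f(\GG_d)]$ depends only on the multiset of values of $d$, and so does the empirical measure $\frac1n\sum_{i=1}^n\delta_{d(i)}$; and since the Wasserstein distance between two empirical measures on $\R$ carried by $n$ atoms of equal mass $1/n$ is attained at the monotone (sorted) matching (see e.g.\ \cite[Chapter 6]{villani} or \cite[Chapter 2]{W1}), we may relabel the vertices so that
\begin{eqnarray*}
\cW\left(\frac1n\sum_{i=1}^n\delta_{d(i)},\frac1n\sum_{i=1}^n\delta_{d'(i)}\right) & = & \frac1n\sum_{i=1}^n|d(i)-d'(i)| \ =: \ \frac{m}{n}.
\end{eqnarray*}
It then suffices to prove $|\EE[\f(\GG_d)]-\EE[\f(\GG_{d'})]|\le 2\kappa\,m$; and interpolating coordinate by coordinate produces a chain $d=d^{(0)},d^{(1)},\dots,d^{(m)}=d'$ whose consecutive terms differ at a single vertex by $\pm1$, so by the triangle inequality everything reduces to the claim that, for any degrees $e$ on $\{1,\dots,n\}$, any vertex $j$, and $e^{+}$ obtained from $e$ by raising its value at $j$ by one,
\begin{eqnarray*}
\left|\EE\left[\f(\GG_{e})\right]-\EE\left[\f(\GG_{e^{+}})\right]\right| & \le & 2\kappa.
\end{eqnarray*}

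To prove this claim I would couple $\GG_e$ and $\GG_{e^{+}}$ directly. Let $H$ be the set of half-edges prescribed by $e$ and $H^{+}=H\sqcup\{a\}$ the one prescribed by $e^{+}$, $a$ being the new half-edge at $j$; recall that $\GG_e$ (resp.\ $\GG_{e^{+}}$) is the graph of a uniformly random perfect matching of $H$ (resp.\ $H^{+}$), with one uniformly chosen half-edge left unmatched when the relevant set has odd cardinality. Assume first that $|H|$ is even. Draw $\GG_{e^{+}}$ and let $u\in H^{+}$ be its (uniform) unmatched half-edge. If $u=a$, then $\GG_{e^{+}}$ is already the graph of a uniform perfect matching of $H$, and we take $\GG_e:=\GG_{e^{+}}$, which has the right law. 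If $u\in H$, let $w$ be the partner of $a$; replacing the pair $\{a,w\}$ by $\{u,w\}$ and leaving every other pair untouched is a bijection between the perfect matchings of $(H\setminus\{u\})\sqcup\{a\}$ and those of $H$, so it turns the matching underlying $\GG_{e^{+}}$ into a \emph{uniform} perfect matching of $H$, i.e.\ a copy of $\GG_e$; as graphs, this copy is obtained from $\GG_{e^{+}}$ by deleting the edge from $j$ to the owner of $w$ and adding the edge from the owner of $u$ to the owner of $w$. In every case $|\f(\GG_e)-\f(\GG_{e^{+}})|\le 2\kappa$ pointwise, which gives the claim upon taking expectations. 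If instead $|H|$ is odd, then $|H^{+}|$ is even and I argue symmetrically by \emph{removing} a half-edge from the now ``clean'' model $\GG_{e^{+}}$: letting $v$ be the partner of $a$ and deleting the pair $\{a,v\}$ leaves a uniform perfect matching of $H^{+}\setminus\{a,v\}=H\setminus\{v\}$ with $v$ as its (uniform) leftover half-edge, i.e.\ a copy of $\GG_e$ differing from $\GG_{e^{+}}$ by the single edge from $j$ to the owner of $v$, so that here in fact $|\f(\GG_e)-\f(\GG_{e^{+}})|\le\kappa$.

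The only step that demands genuine care is verifying that the ``splicing'' operations above do transport the uniform law on one set of perfect matchings onto the uniform law on the other --- it is enough, as indicated, that they be bijections between finite sets of equal size --- together with a careful reading of the graph discrepancy in the degenerate configurations (when the owner of $w$, of $u$, or of $v$ coincides with $j$, producing or destroying loops, or when $u=a$). None of these is hard, and each can only make the bound smaller, so the constant $2\kappa$ is not threatened.
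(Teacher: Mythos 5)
Your proof is correct and takes essentially the same approach as the paper: reduce by sorting to a unit edit distance, then couple the two configuration models by removing (when the larger half-edge set is even) or splicing out (when it is odd) the extra half-edge, and use the Lipschitz property to bound the change in $\f$ by $2\kappa$ per unit step. The only cosmetic differences are that you invoke the sorted-coupling identity for $\cW$ at the outset rather than at the end, and you note explicitly that one of the two parity cases actually gives the sharper bound $\kappa$.
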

Let us apply this when $\{d'(i)\}_{1\leq i\leq n}$ are \textsc{iid} samples from $\mu$. Recalling the $L^1$ convergence (\ref{sllnw}) and the assumption that $\{d_n\}_{n\geq 1}$ approaches $\mu$, we obtain
\begin{eqnarray*}
\left|\frac{\EE\left[\f\left(\GG_{d_n}\right)\right]}{n}-\frac{\EE\left[\f\left(\GG^\textsc{iid}_{\mu,n}\right)\right]}{n} \right|
& \xrightarrow[n\to\infty]{} & 0,
\end{eqnarray*}
by the triangle inequality.  
In view of (\ref{eq:cviid}), we may now conclude that
\begin{eqnarray}
\label{cv:expect}
\frac{\EE[\f(\GG_{d_n})]}{n}& \xrightarrow[n\to\infty]{}& \Psi(\mu).
\end{eqnarray}

Finally, since  $\f$ is Lipschitz, a now-standard application of Azuma-Hoeffding's inequality ensures that $\f(\GG_d)$ is exponentially concentrated : for any $\varepsilon>0$,
\begin{eqnarray}
\label{concentration}
\PP\left(\left|{\f(\GG_{d})}-{\EE[\f(\GG_{d})]}\right|\geq \varepsilon\right)& \leq & \exp\left(-\frac{\varepsilon^2}{4\kappa^2\sum_id(i)}\right).
\end{eqnarray}
See \cite[Theorem 2.19]{Wormald} for a proof when $d$ is constant and \cite[Corollary 3.27]{Bordenave} for the general case. 
In view of (\ref{h5}), Borel-Cantelli's Lemma ensures that 
\begin{eqnarray*}
\left|\frac{\f(\GG_{d_n})}{n}-\frac{\EE[\f(\GG_{d_n})]}{n}\right| & \xrightarrow[n\to\infty]{}& 0, 
\end{eqnarray*}
almost-surely under any coupling of the random graphs $\left\{\GG_{d_n}\right\}_{n\geq 1}$. Combining this with (\ref{cv:expect}) concludes the proof of (\ref{cvmain}). The Lipschitz continuity (\ref{lipschitz}) follows by passing to the limit in Proposition \ref{pr:compare} along sequences $\{d_n\}_{n\geq 1},\{d_n'\}_{n\geq 1}$ that approach $\mu,\mu'$ in the sense of $(\ref{h4})-(\ref{h5})$. Since the concatenation of $d_n$ and $d'_n$ approaches $\frac{\mu+\mu'}{2}$, we may also pass to the limit in Proposition \ref{pr:main} to obtain (\ref{concave}) when $\theta=\frac{1}{2}$. This mid-point concavity implies concavity, since $\Psi$ is continuous.  The remainder of the paper is devoted to the proofs of Proposition \ref{pr:main}  and \ref{pr:compare}.

\section{Proof of Proposition \ref{pr:main}}
\label{sec:main}

Throughout this section, we fix a finite set $V$ and a function $d\colon V\to\N$. Form a set $\cH$ of half-edges by ``attaching" $d(i)$ half-edges with each end-point $i\in V$:
$$\cH:=\bigcup_{i\in V}\left\{(i,1),\ldots,(i,d(i))\right\}$$
 A (partial) \emph{matching} $\m$ of $\cH$ is a collection of pairwise disjoint $2-$element subsets of $\cH$. Such a matching naturally induces a graph $G[\m]$ on $V$ by interpreting a pair of matched half-edges as an edge between the corresponding end-points.  By definition,  $\GG_d$ is the random graph induced by a uniformly chosen random maximal matching on $\cH$. Now fix a bipartition $V=A\cup B$, and define
\begin{eqnarray}
\label{sumnotation}
d(A)=\sum_{i\in A}d(i) & \textrm{ and } & d(B)=\sum_{i\in B}d(i).
\end{eqnarray}
 For $\left(\alpha,\beta,\gamma\right)\in\N^3$, let $\cM({\alpha,\beta,\gamma})$ denote the set of all matchings of $\cH$ containing  
\begin{itemize}
\item $\alpha$ edges with both end-points in $A$ 
\item $\beta$ edges with both end-points in $B$
\item $\gamma$ edges with one end-point in $A$ and the other in $B$ (called cross-edges).
\end{itemize}
Note that $\cM({\alpha,\beta,\gamma})\neq\emptyset$ only if
$
2\alpha+\gamma\leq d(A)$ and $2\beta+\gamma\leq d(B)$. When this condition holds, we call the triple $(\alpha,\beta,\gamma)$ feasible, and we define
\begin{eqnarray*}
F(\alpha,\beta,\gamma) & \textrm{ = } & \frac{1}{|\cM({\alpha,\beta,\gamma})|}\sum_{\m\in\cM({\alpha,\beta,\gamma})}\f(G[\m]).
\end{eqnarray*}
In other words, $F(\alpha,\beta,\gamma)$ is the expectation of $\f(G[\m])$ when $\m$ is uniform on $\cM({\alpha,\beta,\gamma})$. To connect this with the configuration model, observe that conditionally on its number $\gamma$ of cross-edges, a uniformly chosen maximal matching $\m$ on $\cH$ is uniformly distributed in $\cM\left({\left\lfloor\frac{d(A)-\gamma}{2}\right\rfloor,\left\lfloor\frac{d(B)-\gamma}{2}\right\rfloor,\gamma}\right)$. Thus, $F\left(\left\lfloor\frac{d(A)-\gamma}{2}\right\rfloor,\left\lfloor\frac{d(B)-\gamma}{2}\right\rfloor,\gamma\right)$ is the conditional expectation of $\f(\GG_d)$ given the number  $\gamma$ of cross-edges. 
On the other-hand, since $\f$ is additive,
$$F\left(\left\lfloor\frac{d(A)}{2}\right\rfloor,\left\lfloor\frac{d(B)}{2}\right\rfloor,0\right)=\EE\left[\f(\GG_{d\restriction A})\right]+\EE\left[\f(\GG_{d\restriction B})\right].$$
Therefore, Proposition \ref{pr:main} is a consequence of the following stronger result, to the proof of which this whole section is devoted. 
\begin{proposition}
\label{pr:global}
For any non-negative integer $\gamma\leq d(A)\wedge d(B)$,
\begin{eqnarray*}
F\left(\left\lfloor\frac{d(A)}{2}\right\rfloor,\left\lfloor\frac{d(B)}{2}\right\rfloor,0\right)  \leq  F\left(\left\lfloor\frac{d(A)-\gamma}{2}\right\rfloor,\left\lfloor\frac{d(B)-\gamma}{2}\right\rfloor,\gamma\right)
+ \varphi(\gamma).
\end{eqnarray*}
\end{proposition}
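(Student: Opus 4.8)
The plan is to interpolate in the number $\gamma$ of cross-edges, comparing the configuration with no cross-edge to the one with $\gamma$ cross-edges by sliding through all intermediate values. Precisely, I would construct on a single probability space a family of random matchings $\m_0,\m_1,\dots,\m_\gamma$, with $\m_k$ uniformly distributed on $\cM\!\left(\left\lfloor\tfrac{d(A)-k}{2}\right\rfloor,\left\lfloor\tfrac{d(B)-k}{2}\right\rfloor,k\right)$, coupled so that $\m_{k+1}$ is obtained from $\m_k$ by a \emph{local move}: pick a uniform within-$A$ edge and a uniform within-$B$ edge of $\m_k$, delete them, turn two of the freed half-edges into a new cross-edge, and re-match the remaining freed half-edges within $A$ and within $B$ respectively (resampling the small affected portion of the matching, and symmetrising over the choices, so as to restore exactly the right uniform law at stage $k+1$). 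A short case analysis according to the parities of $d(A)-k$ and $d(B)-k$ — which is precisely what the floor functions encode — shows that $G[\m_{k+1}]$ differs from $G[\m_k]$ by deleting and inserting only a bounded number of edges, so that $\f(G[\m_{k+1}])-\f(G[\m_k])$ can be written as a bounded linear combination of entries of the increment matrix $\Delta^{G}$ of a single auxiliary graph $G$ (the common part of $G[\m_k]$ and $G[\m_{k+1}]$).

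The role of the concavity hypothesis is to make the telescoped sum point in the right direction. Averaging the one-step increment over the rewiring randomness yields, up to lower-order corrections, a quantity proportional to $2\langle\Delta^{G}\rangle_{AB}-\langle\Delta^{G}\rangle_{AA}-\langle\Delta^{G}\rangle_{BB}$, where $\langle\cdot\rangle_{AB}$, $\langle\cdot\rangle_{AA}$, $\langle\cdot\rangle_{BB}$ denote averages of the entries of $\Delta^{G}$ against the empirical half-edge distributions carried by the vertices of $A$ and of $B$. Feeding into the \textsc{cnd} inequality the signed vector supported on the relevant half-edge endpoints, carrying equal total mass of opposite signs on the two sides (hence summing to zero), shows that this main term is non-negative: creating a cross-edge increases $\f$ at least as much as the within-edges it displaces decrease it. This is the discrete incarnation of the Guerra--Toninelli convexity underlying the interpolation method, and it is what gives the inequality its direction.

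The real obstacle is to control the accumulation of the lower-order corrections just swept aside. They have three origins: (i) the increment matrices of the successive graphs $G[\m_k]$ are not literally the fixed reference matrix $\Delta^{G}$; (ii) in a finite graph the two endpoints of a uniformly chosen edge are not exactly an independent pair of half-edge samples; (iii) the parity floors and the fact that the rewiring map is only approximately measure-preserving. A single one-step correction is $O(\kappa)$, so a term-by-term bound would only give $O(\kappa\gamma)$, which is far too weak. The gain must come from observing that, once suitably centred, these corrections behave like the increments of a martingale adapted to the interpolation, each of size an absolute multiple of $\kappa$, so that a maximal version of the Azuma--Hoeffding inequality brings the accumulated error down to $O\!\left(\kappa\sqrt{\gamma\ln(1+\gamma)}\right)$ — the logarithmic factor being the one characteristic of such a maximal estimate. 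Making this concentration argument precise, while tracking constants carefully enough to land below $\varphi(\gamma)=7\kappa\sqrt{\gamma\ln(1+\gamma)}$, is the technical core of the proof.

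Finally, summing the one-step estimates from $k=0$ to $k=\gamma$, using $\varphi(0)=0$ together with the concavity and monotonicity of $\varphi$ to absorb the per-step error terms, yields $F\!\left(\left\lfloor\tfrac{d(A)}{2}\right\rfloor,\left\lfloor\tfrac{d(B)}{2}\right\rfloor,0\right)\le F\!\left(\left\lfloor\tfrac{d(A)-\gamma}{2}\right\rfloor,\left\lfloor\tfrac{d(B)-\gamma}{2}\right\rfloor,\gamma\right)+\varphi(\gamma)$, which is the claimed inequality.
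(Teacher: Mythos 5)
Your proposal correctly identifies the interpolation-in-$\gamma$ idea, the role of the \textsc{cnd} property in making the main one-step comparison go in the right direction, and the fact that a naive term-by-term bound gives only $O(\kappa\gamma)$. However, the proposed mechanism for improving this to $O\bigl(\kappa\sqrt{\gamma\ln(1+\gamma)}\bigr)$ does not work, and this is where the real content of the proof lives.

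The per-step corrections in this kind of interpolation are \emph{one-sided drift terms}, not centred fluctuations. When you compare a cross-pairing against a within-pairing and expand the difference as in the paper's Lemma~\ref{superadd}, the \textsc{cnd} term is $\leq 0$, but the residual terms (the ``diagonal'' part you isolate when symmetrising the quadratic form so that the coefficient vector sums to zero) are bounded in absolute value by $\kappa/c(A)+\kappa/c(B)$, where $c(A),c(B)$ are the numbers of unmatched half-edges available on each side. These residuals have a definite sign-indeterminate but systematic magnitude; they do not cancel under any centring, so Azuma--Hoeffding (or any maximal inequality applied to them) does not reduce their accumulated size below $\gamma$ times the typical per-step bound. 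In particular, along the ``tight'' path you describe, where each $\m_k$ lives on the floor $\left(\lfloor(d(A)-k)/2\rfloor,\lfloor(d(B)-k)/2\rfloor,k\right)$, one has $c(A),c(B)\in\{0,1\}$ at every step, so the per-step error is $\Theta(\kappa)$ and the total is $\Theta(\kappa\gamma)$ --- there is no concentration to be had.

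The missing idea is a \emph{slack parameter} $\delta$. The paper keeps $c(A),c(B)\gtrsim\delta$ along the whole interpolation path, which reduces the per-step drift to $2\kappa/\delta$ and hence the accumulated drift to $2\kappa\gamma/\delta$. Maintaining this slack has two costs: (a) the one-step estimate (Lemma~\ref{superadd}) only controls the \emph{average} $\tfrac12\bigl(F(\alpha+1,\beta,\gamma)+F(\alpha,\beta+1,\gamma)\bigr)$ against $F(\alpha,\beta,\gamma+1)$, so the path must choose randomly (via a simple random walk $S_t$) whether to increment $\alpha$ or $\beta$, and one must stop if $|S_t|$ exceeds $\delta$; and (b) the path starts $2\delta$ short of the target in the $\gamma$-coordinate and must be reconnected at both ends via the Lipschitz Lemma~\ref{lm:lipschitz}, contributing $O(\kappa\delta)$. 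The Doob/Azuma-type maximal inequality in the paper controls the exit probability of the auxiliary random walk $S_t$ — not an accumulated error as in your sketch — and the final bound $\varphi(\gamma)=7\kappa\sqrt{\gamma\ln(1+\gamma)}$ comes from optimising the trade-off $\kappa\gamma/\delta + \kappa\delta + \kappa\gamma\,\PP(\text{exit})$ over $\delta\approx\sqrt{\gamma\ln(1+\gamma)}$. Without introducing the slack $\delta$ (and the attendant random path and end-of-path corrections), the argument cannot get below $\kappa\gamma$.

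One further point: rather than constructing a single coupled sequence of matchings $\m_0\subset\dots\subset\m_\gamma$, the paper works at the level of expectations. It uses the elementary sampling Lemma~\ref{lm:pairing} to express $F(\alpha+1,\beta,\gamma)$, $F(\alpha,\beta+1,\gamma)$, $F(\alpha,\beta,\gamma+1)$ as expectations over a single uniformly random $\m\in\cM(\alpha,\beta,\gamma)$ followed by one random pairing, and then compares the three on that common space. This sidesteps the delicate issue in your sketch of designing a ``local move'' from $\m_k$ to $\m_{k+1}$ that exactly preserves uniformity (your parenthetical ``resampling... so as to restore exactly the right uniform law'' is doing a lot of unjustified work there).
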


Given a matching $\m$ on $\cH$, one can create a larger matching $\m'\supset \m$ by adding to $\m$ a uniformly chosen pair of distinct unmatched half-edges (provided they exist). Restricting the choice to half-edges whose end-point is in $A$, or in $B$, or to pairs in which one end-point is in $A$ and the other in $B$ defines what we call a random $A-$pairing, $B-$pairing or cross-pairing. These can be performed sequentially to sample $\cM(\alpha,\beta,\gamma)$ uniformly, as shown by the following Lemma.
\begin{lemma}
\label{lm:pairing}
Let $\m$ be uniformly distributed on $\cM(\alpha,\beta,\gamma)$. Conditionally on $\m$, make a random $A$ (resp. $B$, resp.  cross) pairing. Then the result $\m'$ is uniformly distributed on $\cM(\alpha+1,\beta,\gamma)$  (resp. $\cM(\alpha,\beta+1,\gamma)$, resp. $\cM(\alpha,\beta,\gamma+1)$). 
\end{lemma}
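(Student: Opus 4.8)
The plan is to prove all three statements by a single reversibility (double-counting) argument; I will carry it out for the $A$-pairing and note that the $B$- and cross-pairing cases are identical after an obvious substitution.

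First I would count, for a matching $\m\in\cM(\alpha,\beta,\gamma)$, the number $N_A$ of unordered pairs of distinct \emph{unmatched} half-edges of $\m$ whose end-points both lie in $A$. Exactly $2\alpha$ half-edges of $A$ are used by the $\alpha$ $A$-edges of $\m$, and exactly $\gamma$ more by its $\gamma$ cross-edges, so the number of free half-edges in $A$ is $d(A)-2\alpha-\gamma$, and hence $N_A=\binom{d(A)-2\alpha-\gamma}{2}$. The crucial observation — and really the only content of the lemma — is that this quantity depends on the triple $(\alpha,\beta,\gamma)$ alone and not on $\m$; moreover $N_A\geq 1$ precisely when $\cM(\alpha+1,\beta,\gamma)\neq\emptyset$, so the random $A$-pairing is well-defined under the implicit feasibility assumption.

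Next I would note that adding an unmatched pair with both end-points in $A$ to an element of $\cM(\alpha,\beta,\gamma)$ always produces an element of $\cM(\alpha+1,\beta,\gamma)$ (loops and multiple edges being allowed, no caveat is needed), and conversely every $\m'\in\cM(\alpha+1,\beta,\gamma)$ arises this way: the pairs $(\m,e)$ with $\m\in\cM(\alpha,\beta,\gamma)$, $e$ an unordered pair of free half-edges of $\m$ in $A$, and $\m\cup\{e\}=\m'$ are in bijection with the $A$-edges of $\m'$, of which there are exactly $\alpha+1$. Therefore, for every fixed $\m'\in\cM(\alpha+1,\beta,\gamma)$,
\[
\PP\bigl(\m\cup\{e\}=\m'\bigr)\;=\;\sum_{\m\subset\m'}\frac{1}{|\cM(\alpha,\beta,\gamma)|}\cdot\frac{1}{N_A}\;=\;\frac{\alpha+1}{|\cM(\alpha,\beta,\gamma)|\,N_A},
\]
which does not depend on $\m'$. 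Hence $\m'$ is uniformly distributed on $\cM(\alpha+1,\beta,\gamma)$, as claimed.

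I do not expect any genuine obstacle: the argument reduces to the one-line computation above once $N_A$ is recognized as constant over $\cM(\alpha,\beta,\gamma)$. The only points requiring care are to record the feasibility hypothesis that makes the pairing well-defined, and to check the analogous constant counts in the other two cases — $N_B=\binom{d(B)-2\beta-\gamma}{2}$ for $B$-pairings and $N_{AB}=(d(A)-2\alpha-\gamma)(d(B)-2\beta-\gamma)$ for cross-pairings — which are likewise $\m$-independent and positive exactly when the respective target sets $\cM(\alpha,\beta+1,\gamma)$, $\cM(\alpha,\beta,\gamma+1)$ are non-empty; the displayed identity then goes through verbatim with $\alpha+1$ replaced by $\beta+1$ or $\gamma+1$ and $N_A$ by $N_B$ or $N_{AB}$.
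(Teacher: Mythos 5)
Your argument is correct and matches the paper's proof essentially line for line: both hinge on the observations that the number of available $A$-pairings $\binom{d(A)-2\alpha-\gamma}{2}$ is the same for every $\m\in\cM(\alpha,\beta,\gamma)$, and that each $\m'\in\cM(\alpha+1,\beta,\gamma)$ has exactly $\alpha+1$ predecessors in $\cM(\alpha,\beta,\gamma)$, so the resulting law is uniform. You have merely written out the probability computation that the paper leaves implicit, and your explicit constants $N_B$ and $N_{AB}$ for the other two cases are the right ones.
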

\begin{proof}
Every $m\in \cM(\alpha,\beta,\gamma)$ admits ${d(A)-2\alpha-\gamma\choose 2}$  allowed $A-$pairings, each producing a distinct $m'\supset m$ in $\cM(\alpha+1,\beta,\gamma)$. By uniformity, it follows that $\PP(\m'=m')$ is proportional to the number of matchings $m\in\cM(\alpha,\beta,\gamma)$ such that $m\subset m'$. But this is exactly $\alpha+1$, independently of $m'\in\cM(\alpha+1,\beta,\gamma)$. The argument for $B-$pairings and cross-pairings is similar. 
\end{proof}
We now exploit this useful observation to establish two key properties of $F$.
\begin{lemma}[Lipschitz continuity]
\label{lm:lipschitz}
For any feasible $(\alpha,\beta,\gamma)$ and $(\alpha',\beta',\gamma')$,
\begin{eqnarray*}
\left|F(\alpha,\beta,\gamma)-F(\alpha',\beta',\gamma')\right| & \leq & \kappa\left(|\alpha-\alpha'|+|\beta-\beta'|+|\gamma-\gamma'|\right)
\end{eqnarray*}
\end{lemma}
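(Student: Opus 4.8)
The plan is to reduce the inequality to the case where $(\alpha',\beta',\gamma')$ is obtained from $(\alpha,\beta,\gamma)$ by adding one unit to a single coordinate, and to handle that elementary case directly via Lemma~\ref{lm:pairing}. Indeed, once we know $|F(\alpha,\beta,\gamma)-F(\alpha',\beta',\gamma')|\le\kappa$ for lattice neighbours, the general bound follows from the triangle inequality, \emph{provided} we can join any two feasible triples by a path in the integer lattice that changes one coordinate by one unit at a time, stays within the feasible region, and has total length exactly $|\alpha-\alpha'|+|\beta-\beta'|+|\gamma-\gamma'|$.

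To build such a path I would route it through the coordinatewise minimum. Set $\bar\alpha=\alpha\wedge\alpha'$, $\bar\beta=\beta\wedge\beta'$, $\bar\gamma=\gamma\wedge\gamma'$. Feasibility only constrains the quantities $2\alpha+\gamma$ and $2\beta+\gamma$ from above, so lowering a coordinate preserves it; hence one can descend from $(\alpha,\beta,\gamma)$ to $(\bar\alpha,\bar\beta,\bar\gamma)$ one unit at a time through feasible triples, and $(\bar\alpha,\bar\beta,\bar\gamma)$ itself is feasible. For the ascending leg from $(\bar\alpha,\bar\beta,\bar\gamma)$ to $(\alpha',\beta',\gamma')$ — say increasing $\gamma$, then $\alpha$, then $\beta$ — every intermediate triple $(a,b,g)$ has $a\le\alpha'$, $b\le\beta'$, $g\le\gamma'$, so $2a+g\le 2\alpha'+\gamma'\le d(A)$ and $2b+g\le 2\beta'+\gamma'\le d(B)$, i.e. it is feasible. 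The concatenated path has the required length, which completes the reduction.

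For the unit step, consider for concreteness $(\alpha',\beta',\gamma')=(\alpha+1,\beta,\gamma)$; feasibility of both triples means $d(A)-2\alpha-\gamma\ge 2$, so a random $A$-pairing is well defined. Let $\m$ be uniform on $\cM(\alpha,\beta,\gamma)$ and let $\m'\supset\m$ be obtained from it by a random $A$-pairing. By Lemma~\ref{lm:pairing}, $\m'$ is uniform on $\cM(\alpha+1,\beta,\gamma)$, while by construction $G[\m']=G[\m]+ij$ for some pair $i,j\in A$, so the Lipschitz property of $\f$ gives $|\f(G[\m'])-\f(G[\m])|=|\Delta^{G[\m]}_{ij}|\le\kappa$ pointwise. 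Taking expectations yields $|F(\alpha+1,\beta,\gamma)-F(\alpha,\beta,\gamma)|=|\EE[\f(G[\m'])-\f(G[\m])]|\le\kappa$, and the cases of $\beta$ and $\gamma$ are identical using a $B$-pairing and a cross-pairing respectively. I do not expect a genuine obstacle here: the only point that needs care is the feasibility bookkeeping along the connecting path, which is precisely why it is routed through the coordinatewise minimum; the rest is a direct consequence of Lemma~\ref{lm:pairing} and the definition of $\kappa$.
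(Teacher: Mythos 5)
Your proof follows the same route as the paper: reduce to unit steps, then bound each unit step by $\kappa$ via Lemma~\ref{lm:pairing} and the Lipschitz property of $\f$. The only difference is that you explicitly construct a feasible lattice path through the coordinatewise minimum — a bookkeeping detail the paper's proof leaves implicit — and your construction is correct.
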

\begin{proof} It is sufficient to prove this when the triples differ by $1$ at a single coordinate. Let us treat only the case $(\alpha',\beta',\gamma')=(\alpha+1,\beta,\gamma)$, the proof for the other cases being similar. Let $\m$ be uniform in $\cM(\alpha,\beta,\gamma)$, and let $\m'$ be obtained from $\m$ by a random $A-$pairing. Then $G[\m']$ differs from $G[\m]$ by exactly one edge, so the Lipschitz assumption guarantees that a-s, $$\f(G[\m])-\kappa\leq \f(G[\m'])\leq\f(G[\m])+ \kappa.$$
But $\m'$ is uniformly distributed on $\cM(\alpha+1,\beta,\gamma)$ by Lemma \ref{lm:pairing}, so taking expectations above yields precisely the desired result. 
\end{proof}

\begin{lemma}
\label{superadd}
Local super-additivity: for $\delta\geq 2$, if $(\alpha,\beta,\gamma+\delta)$ is feasible then 
\begin{eqnarray*}
\frac{F(\alpha+1,\beta,\gamma)+F(\alpha,\beta+1,\gamma)}{2} & \leq  & F(\alpha,\beta,\gamma+1)+ \frac{2\kappa}{\delta}
\end{eqnarray*}
\end{lemma}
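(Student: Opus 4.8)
The plan is to prove the inequality conditionally on the matching underlying $\cM(\alpha,\beta,\gamma)$ and then average, much as in the proofs of Lemmas \ref{lm:pairing} and \ref{lm:lipschitz}. Fix $\m$ uniform on $\cM(\alpha,\beta,\gamma)$ (non-empty since $(\alpha,\beta,\gamma+\delta)$ is feasible and $\delta\geq 0$), set $G=G[\m]$ and $\Delta=\Delta^{G}$, and let $x_1,\dots,x_a$ (respectively $y_1,\dots,y_b$) list with multiplicity the end-points lying in $A$ (respectively $B$) of the half-edges left unmatched by $\m$. Here $a=d(A)-2\alpha-\gamma$ and $b=d(B)-2\beta-\gamma$ are \emph{deterministic}, and feasibility of $(\alpha,\beta,\gamma+\delta)$ with $\delta\geq 2$ forces $a\geq\delta\geq 2$ and $b\geq\delta\geq 2$, so a random $A$-pairing, $B$-pairing and cross-pairing of $\m$ are all well-defined; write $\m'_A,\m'_B,\m'_{\mathrm c}$ for their outcomes. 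Spelling out the definition of a random pairing and of the increments $\Delta_{uv}=\f(G+uv)-\f(G)$ gives
\[
\EE\bigl[\f(G[\m'_A])\mid\m\bigr]-\f(G)=\frac{1}{\binom a2}\sum_{1\leq i<j\leq a}\Delta_{x_ix_j},
\]
and similarly $\EE[\f(G[\m'_B])\mid\m]-\f(G)=\frac{1}{\binom b2}\sum_{i<j}\Delta_{y_iy_j}$ and $\EE[\f(G[\m'_{\mathrm c}])\mid\m]-\f(G)=\frac{1}{ab}\sum_{i,j}\Delta_{x_iy_j}$.

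The heart of the argument is a single use of the \textsc{cnd} (concavity) hypothesis, which I would apply to the vector $z\in\R^{V}$ with $z_v=\tfrac1a\,\#\{i:x_i=v\}-\tfrac1b\,\#\{j:y_j=v\}$; since $A$ and $B$ are disjoint this is well-defined and $\sum_{v\in V}z_v=1-1=0$, so $\sum_{u,v}\Delta_{uv}z_uz_v\leq 0$, that is
\[
\frac{1}{a^{2}}\sum_{1\leq i,i'\leq a}\Delta_{x_ix_{i'}}\;-\;\frac{2}{ab}\sum_{i,j}\Delta_{x_iy_j}\;+\;\frac{1}{b^{2}}\sum_{1\leq j,j'\leq b}\Delta_{y_jy_{j'}}\;\leq\;0 .
\]
It remains to reconcile this with the three conditional expectations above. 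The differences are of two kinds: the diagonal terms $\Delta_{x_ix_i}$ and $\Delta_{y_jy_j}$, which appear in the ordered double sums but not in the strict pair-sums; and the gap between $a^{2}$ and $2\binom a2$ (respectively $b^{2}$ and $2\binom b2$) in the denominators. Each of these four corrections is at most $\kappa/a$ or $\kappa/b$ by the crude Lipschitz bound $|\Delta_{uv}|\leq\kappa$, so after dividing through by $2$ they contribute at most $\kappa/a+\kappa/b$; since $a,b\geq\delta$ this yields
\[
\frac{\EE[\f(G[\m'_A])\mid\m]+\EE[\f(G[\m'_B])\mid\m]}{2}\;\leq\;\EE[\f(G[\m'_{\mathrm c}])\mid\m]+\frac{2\kappa}{\delta}.
\]

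To conclude, I would take the expectation over $\m$ and invoke Lemma \ref{lm:pairing}: the (unconditional) laws of $\m'_A,\m'_B,\m'_{\mathrm c}$ are uniform on $\cM(\alpha+1,\beta,\gamma)$, $\cM(\alpha,\beta+1,\gamma)$ and $\cM(\alpha,\beta,\gamma+1)$, so the three integrated conditional expectations equal $F(\alpha+1,\beta,\gamma)$, $F(\alpha,\beta+1,\gamma)$ and $F(\alpha,\beta,\gamma+1)$, which is exactly the claim. I expect the only fiddly point to be the bookkeeping in the last displayed inequality — verifying that the several $O(\kappa/a)$ and $O(\kappa/b)$ errors really add up to $\kappa/a+\kappa/b$ rather than a larger multiple — whereas all the conceptual content sits in the choice of the test vector $z$, the natural ``transport plan'' comparing the two within-side pairings with the cross pairing, which is precisely where concavity of $\f$ is used.
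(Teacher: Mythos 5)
Your proof is correct and is essentially the argument in the paper: condition on a uniform $\m\in\cM(\alpha,\beta,\gamma)$, apply the \textsc{cnd} property to the signed vector of normalized unpaired-half-edge multiplicities (your $z$ is exactly the paper's $x$, written via counts $c(i)$ rather than lists with multiplicity), bound the two residual terms for each of $A$ and $B$ by the Lipschitz property, and integrate using Lemma \ref{lm:pairing}. The deferred bookkeeping does indeed close: $\tfrac12\bigl[\tbinom a2^{-1}\sum_{i<j}\Delta_{x_ix_j}-a^{-2}\sum_{i,i'}\Delta_{x_ix_{i'}}\bigr]=\tfrac12\bigl[\tfrac{2}{a^2(a-1)}\sum_{i<j}\Delta_{x_ix_j}-\tfrac{1}{a^2}\sum_i\Delta_{x_ix_i}\bigr]\le\kappa/a$, and similarly for $B$, giving $\kappa/a+\kappa/b\le 2\kappa/\delta$ since feasibility of $(\alpha,\beta,\gamma+\delta)$ forces $a,b\ge\delta$.
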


\begin{proof}Fix  $\m\in\cM(\alpha,\beta,\gamma)$. Let $\m'$ be obtained from $\m$ by  a random cross-pairing, and let $\m''$ be obtained from $\m$ by flipping a fair coin and making a random $A-$pairing or $B-$pairing accordingly. We will prove that 
\begin{eqnarray*}
\EE\left[\f(G[\m''])\right] -\EE\left[\f(G[\m'])\right] & \leq & \frac{\kappa}{d(A)-2\alpha-\gamma}+\frac{\kappa}{d(B)-2\beta-\gamma}.
\end{eqnarray*}
The assumption ensures that the right-hand side is at most $2\kappa/\delta$, and averaging over all $\m\in\cM(\alpha,\beta,\gamma)$ implies the result, by Lemma \ref{lm:pairing}. 
  Write $c(i)$ for the number of unpaired half-edges attached to $i\in V$ in $\m$, and define $c(A),c(B)$ as in  (\ref{sumnotation}). Set also $\Delta_{ij}=\f(G[\m+ij])-\f(G[\m])$. With this notation, we have
\begin{eqnarray*}
\EE\left[\f(G[\m''])-\f(G[\m])\right] & = & \frac{1}{2}\sum_{(i,j)\in A\times A}\Delta_{ij}\frac{c(i)\left(c(j)-\ind_{i=j}\right)}{c(A)\left(c(A)-1\right)}\\ & + & \frac{1}{2}\sum_{(i,j)\in B\times B}\Delta_{ij}\frac{c(i)\left(c(j)-\ind_{i=j}\right)}{c(B)\left(c(B)-1\right)}\\
\EE\left[\f(G[\m'])-\f(G[\m])\right]  & = & \frac{1}{2}\sum_{(i,j)\in A\times B}\Delta_{ij}\frac{c(i)c(j)}{c(A)c(B)} + \frac{1}{2}\sum_{(i,j)\in B\times A}\Delta_{ij}\frac{c(i)c(j)}{c(B)c(A)}.
\end{eqnarray*}
We may thus decompose the difference $\EE[\f(G[\m''])] - 
\EE[\f(G[\m'])]$ as  
\begin{eqnarray}
\label{three}
\frac 12\sum_{(i,j)\in V\times V}\Delta_{ij}x_ix_j + \frac 12\sum_{(i,j)\in A\times A}\Delta_{ij}z_{ij}+\frac 12\sum_{(i,j)\in B\times B}\Delta_{ij}z_{ij}
\end{eqnarray}
where we have set 
\begin{eqnarray*}
x_i \ = \
\begin{cases}
\frac{c(i)}{c(A)}& \textrm{ if }i\in A\\
\smallskip\\
-\frac{c(i)}{c(B)} & \textrm{ if }i\in B
\end{cases}
& \textrm{  }& 
z_{ij} \ = \ \begin{cases}
\frac{c(i)\left(c(j)-c(A)\ind_{i=j}\right)}{c(A)c(A)\left(c(A)-1\right)}& \textrm{ if }(i,j)\in A\times A\\ \smallskip \\
\frac{c(i)\left(c(j)-c(B)\ind_{i=j}\right)}{c(B)c(B)\left(c(B)-1\right)}& \textrm{ if }(i,j)\in B\times B
\end{cases}
\end{eqnarray*}
Now, the first term in (\ref{three}) is non-positive since $\sum_{i}x_i=0$ and $\{\Delta_{ij}\}$ is \textsc{cnd} ($\f$ is concave). For the second term, note that $|\Delta_{ij}|\leq\kappa$ ($\f$ is Lipschitz), so that
\begin{eqnarray*}
\frac 12\sum_{(i,j)\in A\times A}\Delta_{ij}z_{ij} 
& \leq & \frac {\kappa}2\sum_{(i,j)\in A\times A}|z_{ij}|\\
& \leq & \kappa\sum_{i\in A}\frac{c(i)\left(c(A)-c(i)\right)}{c(A)c(A)\left(c(A)-1\right)}\\
& \leq & \frac{\kappa}{c(A)},
\end{eqnarray*}
where we have used the inequality $k(n-k)\leq k(n-1)$ valid for any integers $0\leq k\leq n$. Replacing $A$ with $B$ yields the bound $\frac{\kappa}{c(B)}$ for the third term.
\end{proof}

We may now deduce Proposition \ref{pr:global} from the above two properties of $F$. 

\begin{proof}[Proof of Proposition \ref{pr:global}]
The claim is trivial when $\gamma$ is small. Indeed, 
$$\left|\left\lfloor\frac{d(A)}{2}\right\rfloor-\left\lfloor\frac{d(A)-\gamma}{2}\right\rfloor\right|
+
\left|\left\lfloor\frac{d(B)}{2}\right\rfloor-\left\lfloor\frac{d(B)-\gamma}{2}\right\rfloor\right|+|0-\gamma|\leq 2\gamma+1
,$$
so Lemma \ref{lm:lipschitz} guarantees that
$$F\left(\left\lfloor\frac{d(A)}{2}\right\rfloor,\left\lfloor\frac{d(B)}{2}\right\rfloor,0\right)  \leq  F\left(\left\lfloor\frac{d(A)-\gamma}{2}\right\rfloor,\left\lfloor\frac{d(B)-\gamma}{2}\right\rfloor,\gamma\right)+\kappa(2\gamma+1).$$
This implies the claim as long as $2\gamma+1\leq 7\sqrt{\gamma\ln(1+\gamma)}$, i.e. $\gamma\in\{1,\ldots,46\}$. We now assume that $\gamma\geq 47$. Let $\delta\in\N$ to be chosen later, such that $2\leq \delta\leq {\gamma}/{2}$. 
Let $\{S_t\}_{t\in\N}$ be a simple random walk on $\Z$ started at $0$. Set $\tau=\gamma-2\delta$ and for every $t\in\{0,\ldots,\tau\}$, consider the random triple $\left({\boldsymbol\alpha}_{t},{\boldsymbol\beta}_{t},{\boldsymbol\gamma}_{t}\right)$ defined by 
\begin{eqnarray*}
{\boldsymbol\alpha}_t  = \left\lfloor\frac{d(A)-\gamma}{2}\right\rfloor+\frac{t+S_{t}}{2},\ \
{\boldsymbol\beta}_t  =  \left\lfloor\frac{d(B)-\gamma}{2}\right\rfloor+\frac{t-S_{t}}{2},\ \
{\boldsymbol\gamma}_t  =  \tau-t.
\end{eqnarray*}
Thus, conditionally on ${\cal F}_t=\sigma\left(S_0,\ldots,S_t\right)$, the triple $\left(\alpha_{t+1},\beta_{t+1},\gamma_{t+1}\right)$ is obtained from $\left(\alpha_{t},\beta_{t},\gamma_{t}\right)$ by decrementing the last coordinate and incrementing either the first or the second coordinate, with probability half each. Moreover, it is immediate to check that $\left({\boldsymbol\alpha}_{t},{\boldsymbol\beta}_{t},{\boldsymbol\gamma}_{t}+\delta\right)$ is feasible for all $t< \tau \wedge T$, where
$$T:=\inf\left\{t\in\N\colon |S_t|> \delta\right\}.$$ 
Therefore, Lemma \ref{superadd} guarantees that the stochastic process $\{Z_t\}_{0\leq t \leq \tau}$ defined by
$Z_t:=F\left({\boldsymbol\alpha}_{t\wedge T},{\boldsymbol\beta}_{t\wedge T},{\boldsymbol\gamma}_{t\wedge T}\right)$ satisfies 
\begin{eqnarray*}
\EE\left[Z_{t+1}|{\cal F}_t\right] & \leq & Z_t+ \frac{2\kappa}{\delta}\ind_{\{t<T\}}.
\end{eqnarray*}
Taking expectations and summing over all $0\leq t <\tau$, we deduce that
\begin{eqnarray}
\label{ineq0}
\EE\left[Z_{\tau}\right]-Z_0 & \leq &  \frac{2\kappa\tau}{\delta}.
\end{eqnarray}
Now, since $Z_0=F\left(\left\lfloor\frac{d(A)-\gamma}{2}\right\rfloor,\left\lfloor\frac{d(B)-\gamma}{2}\right\rfloor,\gamma-2\delta\right)$, Lemma (\ref{lm:lipschitz}) yields
\begin{eqnarray}
\label{ineq1}
Z_0- F\left(\left\lfloor\frac{d(A)-\gamma}{2}\right\rfloor,\left\lfloor\frac{d(B)-\gamma}{2}\right\rfloor,\gamma\right) & \leq & 2\kappa\delta.
\end{eqnarray}
On the other hand, observing that almost-surely,
\begin{eqnarray*}
\left|\left\lfloor\frac{d(A)}{2}\right\rfloor-{\boldsymbol\alpha}_{\tau\wedge T}\right| 
+\left|\left\lfloor\frac{d(B)}{2}\right\rfloor-{\boldsymbol\beta}_{\tau\wedge T}\right|+\left|{\boldsymbol\gamma}_{\tau\wedge T}\right| 
& \leq & 2\tau\ind_{\tau\geq T}+2\delta+1,
\end{eqnarray*}
we may invoke Lemma \ref{lm:lipschitz} again to obtain
\begin{eqnarray*}
F\left(\left\lfloor\frac{d(A)}{2}\right\rfloor,\left\lfloor\frac{d(B)}{2}\right\rfloor,0\right)-Z_{\tau} & \leq &  \kappa\left(2\tau\ind_{\tau\geq T}+2\delta+1\right).
\end{eqnarray*}
Taking expectations yields
\begin{eqnarray}
\label{ineq2}
F\left(\left\lfloor\frac{d(A)}{2}\right\rfloor,\left\lfloor\frac{d(B)}{2}\right\rfloor,0\right)-\EE\left[Z_{\tau}\right] & \leq & \kappa\left(4\tau e^{-\frac{(\delta+1)^2}{2\tau}}+2\delta+1\right),
\end{eqnarray}
where we used the following classical consequence of Doob's maximal inequality:
\begin{eqnarray*}
\label{doob}
\PP\left(T\leq \tau\right)& = & \PP\left(\max_{0\leq t\leq \tau}|S_t|\geq \delta+1\right)
\ \leq \ 2 e^{-\frac{(\delta+1)^2}{2\tau}}.
\end{eqnarray*}
Adding up (\ref{ineq0})-(\ref{ineq1})-(\ref{ineq2}) and recalling that $\tau=\gamma-2\delta$, we finally arrive at
\begin{eqnarray*}
{F\left(\left\lfloor\frac{d(A)}{2}\right\rfloor,\left\lfloor\frac{d(B)}{2}\right\rfloor,0\right) - F\left(\left\lfloor\frac{d(A)-\gamma}{2}\right\rfloor,\left\lfloor\frac{d(B)-\gamma}{2}\right\rfloor,\gamma\right) } \\
 \leq 
 \kappa\left(\frac{2\gamma}{\delta}+4\delta+4\gamma e^{-\frac{(\delta+1)^2}{2\gamma}}\right).
\end{eqnarray*}
The choice  $\delta=\left\lfloor\sqrt{\gamma\ln(1+\gamma)}\right\rfloor$  yields the bound $c(\gamma)\kappa\sqrt{\gamma\ln(1+\gamma)}$, where
\begin{eqnarray*}
c(\gamma)=\frac{2}{\ln(1+\gamma)-\sqrt{\frac{\ln(1+\gamma)}{\gamma}}}+4+\frac{4}{\sqrt{\ln(1+\gamma)}}.
\end{eqnarray*}
This quantity decreases with $\gamma$, so $c(\gamma)\leq c(47)\approx 6.59<7$.
\end{proof}

\section{Proof of Proposition \ref{pr:compare}}
\label{sec:compare}

Let us first establish that 
\begin{eqnarray}
\label{eq:l1}
\left|\EE\left[\f(\GG_d)\right]-\EE\left[\f(\GG_{d'})\right]\right| & \leq & 2\kappa \sum_{i=1}^n|d(i)-d'(i)|.
\end{eqnarray}
By an immediate induction, we may restrict our attention to the case where $d(i)=d'(i){\ind}_{i=i_0}$
for some $i_0\in \{1,\ldots,n\}$. Recall that $\GG_d,\GG_{d'}$ can be realized as $G[\m],G[\m']$ where $\m,\m'$ are uniform maximal matchings on the corresponding sets of half-edges $\cH,\cH'$. But $\cH=\cH'\cup\{h\}$, where $h=(i_0,d(i_0))$ denotes the extra half-edge attached to $i_0$. We may thus couple $\m'$ to $\m$ as follows.
\begin{itemize}
\item If $|\cH|$ is even, then the matching $\m$ is perfect and we let $\m'$ denote the matching obtained by simply removing from $\m$ the pair containing $h$. 
\item If $|\cH|$ is odd, then there must be an unpaired half-edge in $\m$, and we let $\m'$ denote the matching obtained by exchanging it with $h$. 
\end{itemize}
In both cases, $\m'$ is uniformly distributed over the maximal matchings of $\cH'$. Moreover, $G[\m],G[\m']$ differ by at most two edges almost-surely, so that
$$\left|\f\left(G[\m]\right)-\f\left(G[\m']\right)\right|\leq 2\kappa.$$
Taking expectations yields (\ref{eq:l1}). Since $\f$ is invariant under graph isomorphism, the left-hand side of (\ref{eq:l1}) is invariant under reordering of $d,d'$. Consequently, we may choose a rearrangement that minimizes the right-hand side. It is classical that the choice $d(1)\leq\ldots\leq d(n)$ and $d'(1)\leq\ldots\leq d'(n)$ is optimal and satisfies 
\begin{eqnarray*}
\sum_{i=1}^n|d(i)-d'(i)| 
& = & n\cW\left(\frac{1}{n}\sum_{i=1}^n\delta_{d(i)},\frac{1}{n}\sum_{i=1}^n\delta_{d'(i)}\right).
\end{eqnarray*}
Re-injecting this into (\ref{eq:l1}) concludes the proof.

\bibliographystyle{plain}
\bibliography{draft}

\begin{thebibliography}{10}

\bibitem{bapat}
Ravindra~B. Bapat and Tirukkannamangai E.~S. Raghavan.
\newblock {\em Nonnegative matrices and applications}, volume~64 of {\em
  Encyclopedia of Mathematics and its Applications}.
\newblock Cambridge University Press, Cambridge, 1997.

\bibitem{interp}
Mohsen Bayati, David Gamarnik, and Prasad Tetali.
\newblock Combinatorial approach to the interpolation method and scaling limits
  in sparse random graphs.
\newblock {\em Ann. Probab.}, 41(6):4080--4115, 2013.

\bibitem{W1}
Sergey Bobkov and Michel Ledoux.
\newblock {\em One-dimensional empirical measures, order statistics and
  Kantorovich transport distances}.
\newblock 2014.
\newblock Book in preparation. Available at
  {http://perso.math.univ-toulouse.fr/ledoux/files/2013/11/Order.statistics.10.pdf}.

\bibitem{Bol80}
B{\'e}la Bollob{\'a}s.
\newblock A probabilistic proof of an asymptotic formula for the number of
  labelled regular graphs.
\newblock {\em European J. Combin.}, 1(4):311--316, 1980.

\bibitem{IRG}
B{\'e}la Bollob{\'a}s, Svante Janson, and Oliver Riordan.
\newblock The phase transition in inhomogeneous random graphs.
\newblock {\em Random Structures Algorithms}, 31(1):3--122, 2007.

\bibitem{sparse}
B{\'e}la Bollob{\'a}s and Oliver Riordan.
\newblock Sparse graphs: metrics and random models.
\newblock {\em Random Structures Algorithms}, 39(1):1--38, 2011.

\bibitem{Bordenave}
Charles Bordenave.
\newblock {\em Lecture notes on random graphs and probabilistic combinatorial
  optimization}.
\newblock 2012.
\newblock {http://www.math.univ-toulouse.fr/~bordenave/coursRG.pdf}.

\bibitem{LD}
Christian Borgs, Jennifer Chayes, and David Gamarnik.
\newblock {Convergent sequences of sparse graphs: A large deviations approach}.
\newblock {\em arXiv:1302.4615}, 2013.

\bibitem{BCKL}
Christian Borgs, Jennifer Chayes, Jeff Kahn, and L{\'a}szl{\'o} Lov{\'a}sz.
\newblock Left and right convergence of graphs with bounded degree.
\newblock {\em Random Structures Algorithms}, 42(1):1--28, 2013.

\bibitem{Bruijn}
Nicolaas~Govert de~Bruijn and Paul Erd{\"o}s.
\newblock Some linear and some quadratic recursion formulas. {II}.
\newblock {\em Nederl. Akad. Wetensch. Proc. Ser. A. {\bf 55} = Indagationes
  Math.}, 14:152--163, 1952.

\bibitem{sly}
Jian {Ding}, Allan {Sly}, and Nike {Sun}.
\newblock Maximum independent sets on random regular graphs.
\newblock {\em {arXiv:1310.4787}}, 2013.

\bibitem{FL03}
Silvio Franz and Michele Leone.
\newblock Replica bounds for optimization problems and diluted spin systems.
\newblock {\em J. Statist. Phys.}, 111(3-4):535--564, 2003.

\bibitem{FLT03}
Silvio Franz, Michele Leone, and Fabio~Lucio Toninelli.
\newblock Replica bounds for diluted non-poissonian spin systems.
\newblock {\em Journal of Physics A: Mathematical and General}, 36(43):10967,
  2003.

\bibitem{Gamarnik}
David {Gamarnik}.
\newblock Right-convergence of sparse random graphs.
\newblock {\em {arXiv:1202.3123}}, 2012.

\bibitem{GT02}
Francesco Guerra and Fabio~Lucio Toninelli.
\newblock The thermodynamic limit in mean field spin glass models.
\newblock {\em Comm. Math. Phys.}, 230(1):71--79, 2002.

\bibitem{Hof13}
Remco van~der Hofstad.
\newblock {\em Random Graphs and Complex Networks}.
\newblock 2013.
\newblock Course notes available at
  {http://www.win.tue.nl/\texttt{\~}rhofstad/NotesRGCN.html}.

\bibitem{Jan09}
Svante Janson.
\newblock The probability that a random multigraph is simple.
\newblock {\em Combin. Probab. Comput.}, 18(1-2):205--225, 2009.

\bibitem{janson2}
Svante {Janson}.
\newblock The probability that a random multigraph is simple, ii.
\newblock {\em {arXiv:1307.6344}}, 2013.

\bibitem{villani}
C{\'e}dric Villani.
\newblock {\em Optimal transport}, volume 338 of {\em Grundlehren der
  Mathematischen Wissenschaften [Fundamental Principles of Mathematical
  Sciences]}.
\newblock Springer-Verlag, Berlin, 2009.
\newblock Old and new.

\bibitem{Wormald}
Nicholas~C. Wormald.
\newblock Models of random regular graphs.
\newblock In {\em Surveys in combinatorics, 1999 ({C}anterbury)}, volume 267 of
  {\em London Math. Soc. Lecture Note Ser.}, pages 239--298. Cambridge Univ.
  Press, Cambridge, 1999.

\end{thebibliography}
\end{document}